\theoremstyle{definition}
\newtheorem{lemma}{Lemma}[section]
\newtheorem{remark}{Remark}[section]
\newtheorem{proposition}{Proposition}[section]
\title{Something}
\date{\today}
\def\div{\mathrm{div}\,}
\def\u{{u}}
\def\zero{{0}}
\newcommand{\vc}[1]{{#1}}
\def\v{{v}}
\def\n{{n}}
\def\dps{\displaystyle}
\def\R{{\mathbb R}}
\def\S{{\mathbb S}}
\def\CC{{\mathscr C}}
\def\D{{\mathscr D}}
\def\pt{\partial}
\def\<{\langle}
\def\>{\rangle}
\def\inclus{{\hookrightarrow}}
\def\bve{\;|\;}
\def\fleche{\rightarrow}
\def\eps{\epsilon}
\def\x{{ x}}
\def\n{{n}}
\def\vh{\vc{\eta}}
\def\wei{\varrho}
\def\grad{\nabla}
\def\div{{\rm div}\;}
\def\pt{\partial}
\def\grad{\nabla}
\def\eps{\varepsilon}
\def\ominft{\Omega^\infty}
\def\oms{\Omega_\star}
\def\omz{{\Omega}_0}
\def\bomega{\overline{\omega}}
\def\bomz{\overline{\Omega}_0}
\def\boms{\overline{\Omega}_\star}
\def\bominft{\overline{\Omega}_\infty}
\newcommand{\hyp}[1]{(${\mathscr H}_{#1}$)}
\newcommand{\Wei}[3]{W^{#1}_{#2}(#3)}
\newcommand{\Wlog}[3]{W^{#1}_{{{\rm log}}}(#3)}
\newcommand{\WL}[1]{L^2_\wei(#1)}
\newcommand{\WLINV}[1]{L^2_{1/\wei}(#1)}
\newcommand{\Hdivw}[1]{H_{1/\wei}({\rm{div}};#1)}
\newcommand{\nrm}[1]{|{#1}|}
\def\bom{\overline{\Omega}}
\def\a{a}
\def\Tinft{T}
\def\xs{x_\star}
\def\cst{c^\star}
\def\Ks{K_\star}
\def\hv{{\hat{v}}}
\def\bw{{w}}
\def\hbw{{\widehat{\bw}}}
\def\bws{w^\star}
\def\hbws{{\widehat w}^\star}
\def\Tauh{{\mathcal T}}
\def\Taush{{\mathcal T}^{\star}}
\def\Taussh{{\mathcal T}^{\star\star}}
\def \psihat{\widehat{\psi}}
\def\Js{J^\star}
\def\tJs{{\pt J^{\star}}}
\def\xstar{x_{\star}}
\def\Matstar{G}
\def\bfs{k}
\def\cfs{V}
\newcommand{\xst}[1]{ x^{\star}_{#1}}
\renewcommand{\leq}{\leqslant}
\renewcommand{\geq}{\geqslant}
\def\om{{\omega}_{\rm ext}}
\def\bom{\overline{\omega}}
\def\bome{\overline{\om}}
\title[Inverted finite elements for exterior Neumann problem]{Inverted finite elements approximation of
the Neumann problem for  second order
elliptic equations  in exterior two-dimensional domains}
 \author{R. Belbaki$^4$}
\address{\rm  $^4$ Ecole Normale Supérieure de Kouba, Algiers, Algeria. }
\author{S. K. Bhowmik$^3$}
\address{\rm  $^3$ Department of Mathematics, University of Dhaka,  Bangladesh}
\author{T. Z. Boulmezaoud$^2$}
\address{\rm  $^2$ Universit\'e Paris-Saclay, UVSQ, LMV, Versailles, France.}
\author{N. Kerdid$^1$}
\address{\rm $^1$ IMSIU, College of Sciences,  Department of mathematics and Statistics, P. O. Box 90950, Riyadh, 11623, Saudi Arabia.}
\author{S. Mziou$^5$}
\address{\rm  $^5$ Laboratoire Lammda, ESSTHS, Universit\'e de Sousse, Tunisia}
\begin{document}

\begin{abstract}
We use  inverted finite elements method  for  approximating solutions of  second order elliptic equations with non-constant coefficients varying to infinity   in the exterior of a 2D bounded obstacle, when a Neumann boundary condition is considered.  After proposing an appropriate functional framework for the deployment of the method, we analyse its convergence and detail its implementation.  Numerical tests performed after implementation confirm  convergence and  high efficiency of the method. 
 \end{abstract}

\keywords{Inverted finite elements, exterior domain, Poisson equation, unbounded domains} 
\subjclass{}
{\let\newpage\relax\maketitle}
\section{Introduction}
In this paper we deal with the approximation of the second order elliptic equation
\begin{equation}\label{second_ord_equa}
- \div(\sigma  \grad u)  = f \; \mbox{ in } \R^2  \backslash \overline{\omega},
\end{equation}
when a Neumann boundary condition is imposed on its boundary
\begin{equation}\label{neumann_code}
\sigma \grad u . \n   = g \;  \mbox{ on } \pt \omega. 
\end{equation}
Here $\omega$ designates a bounded open set of $\R^2$ having a lispchitzian boundary $\pt \omega$,  
$ \R^2  \backslash \overline{\omega}$ is its exterior and $\n$ is the unit normal vector 
pointing towards $\omega$ and  outwards $ \R^2  \backslash \overline{\omega}$. 
Formally, we also complete equation \eqref{second_ord_equa} with
a condition at infinity which we momentarily write as 
\begin{equation}\label{cond_behav1}
u(\x) = \circ (\log(|\x|)^{1/2}) \mbox{ when } \nrm{\x} \fleche +\infty.
\end{equation}
The exact meaning of this condition will be clarified later. Actually, since the domain $ \R^2  \backslash \overline{\omega}$  is unbounded, appropriate weighted spaces
will be used  for describing the decay or the growth of functions at infinity. \\
Following  \cite{boulmezaoudm2an}, \cite{boulmezaoud_mziou1}, \cite{boulmezaoud_mziou2},  and \cite{bhowmikmziou}, our approach  consists to use Inverted Finite Elements  Method (IFEM) introduced by one of the authors \cite{boulmezaoudm2an}. This method was used with a significant success in solving 2D Dirichlet's problem
even when the coefficient $\sigma$  is indefinitely  varying  (see \cite{bhowmikmziou}). It has also been used to solve one-dimensional problems, with or without singularities (see \cite{boulmezaoud_mziou1} and \cite{boulmezaoud_mziou2}), and fully three-dimensional problems (see, e. g., \cite{boulmezaoudm2an} and \cite{boulmezaoud_divcurl_ifem}). 
This short paper is in the continuation of paper  \cite{bhowmikmziou}  in which the authors use IFEM for solving the same equation with a Dirichlet's boundary condition. Although the approach is somewhat similar, there are substantial differences between the two systems of equations as well as important technical details about the method, which fully justify this work. \\
$\;$\\
The outline of this paper is as follows. In Section \ref{funcspa} we introduce an appropriate family
of weighted spaces and we prove the well posedness of the problem. A weak formulation
of equation \eqref{second_ord_equa}-\eqref{neumann_code} is also given.
In Section \ref{ifem_method}, inverted finite element method is adapted to the problem.
The last section is devoted to some computational results which prove
the efficiency of the method.

\section{Function spaces and variational formulation.}\label{funcspa}
Our intention here is is to set up and establish an adequate formulation of the 
system \eqref{second_ord_equa}-\eqref{neumann_code} 
 that is suitable for its discretization by the inverted finite elements method.  In the sequel, we set
 $$
 \om = \R^2 \backslash \bom. 
 $$
Following  \cite{hanouzet} and \cite{boulmezaoud2, boulmezaoud3},
we  base our approach on particular weighted
 Sobolev spacse. Since the $2$-dimensional
case is often considered as critical (see, e. g., \cite{giroire}), the  weight we use contains
a logarithmic factor. Set
$$
\wei(\x) =   (|\x|^2 + 1)^{-1} (\log(2+|\x|^2))^{-2}, \; \x \in \R^2,
$$
and define the space $\WL{\om}$ of all the measurable functions on $\om$ satisfying
$$
\int_{\om}  \wei(\x)  |v|^2 dx = \int_{\om} \frac{ |v|^2 }{(|\x|^2 + 1) (\log(2+|\x|^2))^2}  dx < +\infty.
$$
This weighted space is equipped  with the natural norm
$$
\|v\|_{L^2_\wei(\om)} = \left(\int_{\om} \wei(\x) |v|^2 dx\right)^{1/2}.
$$
Similarly, let $\WLINV{\om}$ be the space of (generalized)  functions satisfying
$$
\int_{\om} \frac{ |v|^2 }{\wei(\x)}dx =   \int_{\om} (|\x|^2 + 1) (\log(2+|\x|^2))^2  |v|^2  dx < +\infty.
$$
Obviously,
$$
\WLINV{\om} \inclus L^2(\om) \inclus \WL{\om}.
$$
For each function $\lambda \in \WLINV{\om}$, the map $\varphi_\lambda  :v \in \WL{\om} \mapsto \int_{\om} \lambda  v dx$ belongs
to $\WL{\om}'$, the dual of $\WL{\om}$ (thanks to Cauchy-Schwarz inequality). Thus, in view of  Riesz's theorem  the map $ \lambda  \in \WLINV{\om} \mapsto  \varphi_\lambda  \in \WL{\om}'$ is an isometry. In other words,
  $\WLINV{\om}$ is a realization of the dual  $\WL{\om}'$ with $L^2(\om)$ as a pivot space. Thus, in what follows
   $\WL{\om}'$  will be identified to  $\WLINV{\om}$. \\

Consider now the weighted Sobolev space $\Wlog{1}{0}{\om}$ composed of the all the measurable functions on $\om$ satisfying
\begin{equation}\label{asympto_cond}
v \in  L^2_\wei(\om), \; \grad v \in L^2(\om)^2,
\end{equation}
and endowed with the norm
$$
\|v\|_{\Wlog{1}{0}{\om}} =  \left(\|  v \|_{L^2_\wei(\om)}^2 + \| \grad v\|_{L^2(\om)^2}^2 \right)^{1/2}.
$$
We also consider the space $\Hdivw{\om}$ of all the vector fields $\v \in L^2(\om)^2$ satisfying 
$$
\wei^{-1/2} \div \v \in L^2(\om).
$$
We also set 
$$\D(\bome) = \{ u_{\bome} \bve u \in \D(\R^2) \}$$
where  $\D(\R^2)$ is the space of infinitely differentiable functions on $\R^2$ with a compact support.  The space $\D(\bome)$ is dense in $\Wlog{1}{0}{\om}$
and similarly $\D(\bome)^2$ is dense in $\Hdivw{\om}$. \\
 It can be proven that  when $|\x| \fleche +\infty$, functions of $\Wlog{1}{0}{\om}$
have a decaying behaviour towards $0$  or at most a logarithmic growth.
More precisely, for $v \in \Wlog{1}{0}{\om}$ we have (see \cite{bhowmikmziou} for the proof):
\begin{equation}\label{inequa_logr}
\lim_{r \fleche +\infty} \frac{\|v(r, \cdot)\|_{L^2(\S^1)}}{ \sqrt{\log r}} = 0,
\end{equation}
where $\S^1$ is the unit circle of $\R^2$ and
$$
\|v(r, .)\|_{L^2(\S^1)} = \left(\int_{\S^1} v(r, \sigma)^2 d\sigma\right)^{1/2}.
$$
In other words,
$$
|v(\x)| = \circ ((\log|\x|)^{1/2}) \mbox{ when } |\x| \fleche +\infty.
$$
We may also observe that functions of
 $\Wlog{1}{0}{\om}$ and $\Hdivw{\om}$ have  locally the same nature as functions
 of the usual Sobolev spaces $H^1$ and $H({\rm{div}})$ respectively (see, e. g., \cite{girault0}). Actually,
 if $K$ stands for an arbitrary compact subset of $\bome$, then, for all         
 $v \in \Wlog{1}{0}{\om}$, $v_{|K} \in H^1(K)$ where $H^1(K)$ denotes the usual
  Sobolev space defined over $K$.  Similarly, for $\v \in \Hdivw{\om}$,
  $\v_{|K} \in H({\rm{div}}; K)$, where $ H({\rm{div}}; K) = \{ \v \in L^2(K)^2 \bve \div \v \in  L^2(K)\}$
   (see, e. g., \cite{girault0}). The difference is indeed in the asymptotic behavior
   when $|\x| \fleche +\infty$.   In view of this, and  since $\pt \omega$ is bounded,
   the trace operator $\gamma_0\; :\;  v \in \Wlog{1}{0}{\om}  \mapsto v_{|\pt \omega} \in H^{1/2}(\pt \omega)$ is well defined and continuous. Similarly, the normal trace operator $$\gamma_N\; :\; \v  \in   \Hdivw{\om} \mapsto  \v.\n \in H^{-1/2}(\pt \omega)$$
is also well  defined, continuous, onto and 
the following Green's formula holds true for $\v \in  \Hdivw{\om}$ and $\varphi \in \Wlog{1}{0}{\om}$: 
\begin{equation}\label{weighted_green_2d}
 \int_{\om} \v. \grad \varphi dx = - \int_{\om}  \div \v.  \varphi dx + \< \v.\n, \gamma_0 \varphi\>_{H^{-1/2}(\pt \omega), H^{1/2}(\pt \omega)}.
\end{equation}
Now, we need the following standard 
assumptions
\begin{itemize}
\item[\hyp{1}] $\sigma  \in L^\infty(\om)$ and there exists a constant $\sigma_0 > 0$ such that
$$
\sigma(\x) \geq \sigma_0 > 0, \ae \mbox{ in } \om.
$$
\item[\hyp{2}] $f \in \WLINV{\om}$ and $g \in H^{-1/2}({\pt \omega})$ are such that
\begin{equation}\label{compatibility}
\int_{\om} f dx + \<g, 1\>_{\pt \omega} = 0,
\end{equation}
\end{itemize}
and we look for solutions of   \eqref{second_ord_equa}-\eqref{neumann_code} belonging to $\Wlog{1}{0}{\om}$. 
However, we may notice that, as in a bounded domain, but not exactly for the same reasons,  uniqueness is lost for the Neumann problem
  \eqref{second_ord_equa}-\eqref{neumann_code}. Actually, among polynomial functions only constant ones  belong to   $\Wlog{1}{0}{\om}$. Under similar asymptotic conditions, this
  loss of uniqueness  occurs also in 1D situations (see, e. g., \cite{boulmezaoud_mziou1} and \cite{boulmezaoud_mziou2}), but does not appear in 3D  (see, e. g., \cite{boulmezaoud2, boulmezaoud1},  \cite{boulmezaoudm2an}). 
   For this reason, we complete
   system  \eqref{second_ord_equa}-\eqref{neumann_code} with the following condition
\begin{equation}\label{mean_cond}
 \int_{\om} \frac{ u }{((|\x|^2 + 1) (\log(2+|\x|^2)^2} dx =0.
\end{equation}
This condition makes sense since  $\wei u \in L^1(\om)$. It also means that $u$ is orthogonal to constant functions
with respect to the inner product $(.,.)_{L^2_\wei(\om)}$. 
 We have
 \begin{proposition}\label{proof_propo_exist}
Under assumptions \hyp{1}-\hyp{2},  system \eqref{second_ord_equa}-\eqref{neumann_code}-\eqref{mean_cond} has one and only one solution $u \in \Wlog{1}{0}{\om}$. Moreover,
\begin{equation}
\| u \|_{\Wlog{1}{0}{\om} }  \leq C( \| f\|_{\WLINV{\om} }
 + \| g\|_{H^{-1/2}{(\pt \omega)}}),
\end{equation}
for some constant $C > 0$ depending only on $\omega$ and the coefficient $\sigma$.
\end{proposition}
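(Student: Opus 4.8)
The plan is to pass to a variational formulation on a suitable quotient space and then apply the Lax--Milgram lemma. Let $V = \Wlog{1}{0}{\om}$ and define the bilinear form
$$
a(u,v) = \int_{\om} \sigma \grad u \cdot \grad v \, dx .
$$
By \hyp{1} this is bounded on $V \times V$. The right-hand side is the linear form
$$
\ell(v) = \int_{\om} f v \, dx + \< g, \gamma_0 v\>_{\pt\omega},
$$
which is continuous on $V$: the first term because $f \in \WLINV{\om} = \WL{\om}'$ and $v \in \WL{\om}$ with $\|v\|_{\WL{\om}} \leq \|v\|_V$, the second because $\gamma_0 : V \to H^{1/2}(\pt\omega)$ is continuous. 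The natural space on which to work is
$$
V_0 = \left\{ v \in V \;\Big|\; \int_{\om} \wei\, v \, dx = 0 \right\},
$$
a closed subspace of $V$ (the constraint is a bounded linear functional on $V$, again since $\wei \in \WLINV{\om}$, or directly by Cauchy--Schwarz), and one checks that constants lie in $V$ and that $V = V_0 \oplus \R$. The compatibility condition \eqref{compatibility} guarantees $\ell(1) = 0$, so $\ell$ is well-defined on $V/\R \cong V_0$, and a solution in $V_0$ is the same as a solution in $V$ satisfying \eqref{mean_cond}.

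The crux is coercivity of $a$ on $V_0$, i.e. a Poincaré--Wirtinger-type inequality: there exists $c > 0$ such that
$$
\|v\|_{\WL{\om}}^2 \leq c \, \|\grad v\|_{L^2(\om)^2}^2 \quad \text{for all } v \in V_0 .
$$
Given this, $a(v,v) \geq \sigma_0 \|\grad v\|_{L^2(\om)^2}^2 \geq (\sigma_0/(1+c)) \|v\|_V^2$ on $V_0$, Lax--Milgram yields a unique $u \in V_0$ with $a(u,v) = \ell(v)$ for all $v \in V_0$ (hence, by $V = V_0 \oplus \R$ and $\ell(1)=0$, for all $v \in V$), and the stability estimate follows from $\|u\|_V \leq ((1+c)/\sigma_0)\|\ell\|_{V'}$ together with the bounds on the two pieces of $\ell$. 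Finally one recovers \eqref{second_ord_equa}--\eqref{neumann_code} in the distributional sense by testing against $\varphi \in \D(\bome)$ (density in $V$) and using Green's formula \eqref{weighted_green_2d} to read off the Neumann condition, noting $\sigma\grad u \in \Hdivw{\om}$ since $\div(\sigma\grad u) = -f$ with $\wei^{-1/2} f \in L^2(\om)$.

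The main obstacle is the Poincaré--Wirtinger inequality on $V_0$. I would argue by contradiction and compactness: suppose $v_n \in V_0$ with $\|v_n\|_{\WL{\om}} = 1$ and $\|\grad v_n\|_{L^2(\om)^2} \to 0$. Then $(v_n)$ is bounded in $V$; extract a weakly convergent subsequence $v_n \rightharpoonup v$ in $V$. On any bounded Lipschitz piece $\om \cap B_R$ the embedding $H^1 \hookrightarrow\hookrightarrow L^2$ is compact, and the weight $\wei$ is bounded above and below on $\om \cap B_R$, so $v_n \to v$ strongly in $\WL{\om\cap B_R}$; the tail $\int_{\om\setminus B_R}\wei |v_n|^2\,dx$ must be controlled uniformly, which is where the logarithmic weight and the decay estimate \eqref{inequa_logr} (equivalently the weighted Hardy-type inequality underlying $W^1_{\mathrm{log}}$) enter — the key point being that $\|\grad v_n\|_{L^2}\to 0$ forces the angular-average part of $v_n$ to behave like a constant at infinity, and a quantitative version of \eqref{inequa_logr} bounds the weighted tail by $\|\grad v_n\|_{L^2}$ plus a lower-order term. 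One concludes $v_n \to v$ strongly in $\WL{\om}$, so $\|v\|_{\WL{\om}} = 1$ and $\grad v = 0$, hence $v$ is constant; but $v \in V_0$ forces $\int_{\om}\wei v\,dx = 0$, so $v = 0$, contradicting $\|v\|_{\WL{\om}}=1$. I expect this compactness-at-infinity step (already essentially available from \cite{bhowmikmziou}, which established \eqref{inequa_logr} and, presumably, the analogous result for the Dirichlet problem) to be the only genuinely delicate part; everything else is a routine application of Lax--Milgram and the trace/Green's-formula machinery recalled above.
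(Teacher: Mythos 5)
Your overall strategy is essentially the paper's: a variational formulation plus Lax--Milgram, with everything resting on the same weighted Poincar\'e--Wirtinger/Hardy inequality. The only structural difference is cosmetic: you restrict the plain form $a(u,v)=\int_{\om}\sigma\grad u\cdot\grad v\,dx$ to the constrained subspace $V_0$, whereas the paper keeps the whole space $\Wlog{1}{0}{\om}$ and adds the rank-one term $\bigl(\int_{\om}\wei u\,dx\bigr)\bigl(\int_{\om}\wei w\,dx\bigr)$ to the bilinear form, which enforces \eqref{mean_cond} automatically (take $w=1$ and use \eqref{compatibility}). The two are equivalent: since $\int_{\om}\wei\,dx<\infty$, the infimum in the paper's Hardy-type inequality $\inf_{\lambda\in\R}\int_{\om}\wei\,|v-\lambda|^2\,dx\leq C\int_{\om}|\grad v|^2\,dx$ is attained at the weighted mean of $v$, which vanishes for $v\in V_0$, so that inequality \emph{is} your coercivity estimate $\|v\|_{\WL{\om}}^2\leq c\,\|\grad v\|_{L^2(\om)^2}^2$ on $V_0$. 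Your version has the small advantage of a cleaner stability constant; the paper's version avoids having to check that $\ell$ and the test space can be reduced to $V_0$.

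The one genuine weak point is your attempted compactness proof of that key inequality. The embedding $\Wlog{1}{0}{\om}\hookrightarrow\WL{\om}$ is \emph{not} compact (Rellich only helps on $\om\cap B_R$; nothing prevents weighted mass from escaping to infinity for a general bounded sequence), and the step where you claim the tails $\int_{\om\setminus B_R}\wei\,|v_n|^2\,dx$ are uniformly small is precisely the quantitative content of the inequality you are trying to prove --- invoking ``a quantitative version of \eqref{inequa_logr} bounding the weighted tail by $\|\grad v_n\|_{L^2}$'' is circular, since \eqref{inequa_logr} as stated is purely qualitative. The standard (and the paper's) way out is to not prove it at all but to cite it from \cite{giroire}; a self-contained proof would go through the decomposition of $v$ into its angular mean plus remainder and a one-dimensional logarithmic Hardy inequality in the radial variable, not through compactness. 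Since you explicitly flag this step as the delicate one and note it is available in the literature, the proposal is acceptable, but the compactness sketch as written would not close.
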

\begin{proof}[Proof of Proposition \ref{proof_propo_exist}]
The proof is based on the following variational formulation of the problem: 
\begin{lemma}\label{lemma_prth}
Under the assumptions \hyp{1}-\hyp{2},
$u \in \Wlog{1}{0}{\om}$  is solution of \eqref{second_ord_equa}-\eqref{neumann_code}-\eqref{mean_cond}
iff: for all $v \in \Wlog{1}{0}{\om}$
\begin{equation}\label{weak_form_n}
\int_{\om} \sigma \grad u. \grad w dx +  \left(\int_{\om} \wei(\x) u  dx\right)\left(\int_{\om} \wei(\x) w  dx\right) =  \int_{\om} f w dx +   \<g, w\>_{\pt \omega}.
\end{equation}
\end{lemma}
\begin{proof}[Proof of Lemma \ref{lemma_prth}]
If $u$ is solution of  \eqref{second_ord_equa}-\eqref{neumann_code}-\eqref{mean_cond}, then
multiplying  \eqref{second_ord_equa} by $v$ and using Green's formula \eqref{weighted_green_2d} with $\v = \grad u$ and $\varphi = w$ gives \eqref{weak_form_n}. Conversely, suppose that $u$ satisfies \eqref{weak_form_n}. Choosing $v=1 \in  \Wlog{1}{0}{\om}$ and using
\eqref{compatibility} gives \eqref{mean_cond}. Then, we prove that $u$ satisfies  \eqref{second_ord_equa} and \eqref{neumann_code}
in a usual manner.
\end{proof}

Well posedness of  \eqref{second_ord_equa}-\eqref{neumann_code}-\eqref{mean_cond} is a consequence of Lax-Milgram theorem. Actually, continuity of the bilinear form on LHS and the linear form on the RHS  of \eqref{weak_form_n} 
results from  assumptions \hyp{1}-\hyp{2}. Coercivity follows from 
the following  Hardy  type inequality (see \cite{giroire}):   there exists
a constant $C > 0$ such that for all $v \in \Wlog{1}{0}{\om}$ 
\begin{equation}
\inf_{\lambda \in \R}  \int_{\om} \frac{|v - \lambda|^2}{(|x|^2+1) (\log(|x|^2+2))^2} dx  \leq C \int_{\om}  | \grad v|^2 dx.
\end{equation}
\end{proof}

\section{Inverted Finite Element Method. Discretization. }\label{ifem_method}
In this section, focus is on the discretization of the system  \eqref{second_ord_equa}-\eqref{neumann_code}-\eqref{mean_cond}  by means of inverted finite element method (IFEM), which was introduced by one of the authors in \cite{boulmezaoudm2an}. The way we unroll the method here
is inspired from \cite{bhowmikmziou} where the Dirichlet's problem is considered. Therefore, only the main elements of the method will
be introduced here. The reader can refer to \cite{bhowmikmziou} for more details (see also   \cite{boulmezaoudm2an, boulmezaoud_mziou1, boulmezaoud_mziou2}). \\
The starting idea  consists to introduce a domain decomposition of $\om$ into the form
\begin{equation}\label{decompo_inft}
\bome = \bomz  \cup (\cup_{i=1}^M \Tinft_i),
\end{equation}
where
\begin{itemize}
\item $\omz$ is a bounded open subset surrounding $\omega$,
\item $\Tinft_1,\cdots,\Tinft_M$ are $M$ (big) infinite triangles satisfying  the assumptions (see Figure \ref{decomp_illust}): 
\begin{itemize}
\item  $\Tinft_1,\cdots,\Tinft_M$ have the origin as a  {\it common} fictitious vertex,
\item $\mathring{\Tinft_i} \cap \omz = \emptyset$, for $1 \leq i \leq M$,
\item For $i \ne j$, $\Tinft_i \cap \Tinft_j$ is either a common infinite edge or empty.
\end{itemize}
\end{itemize}
The term {\it infinite triangle} means  a geometric unbounded closed set of the form
$$
\Tinft = \{ \lambda_0 \a_0 +  \lambda_1 \a_1+  \lambda_ 2\a_2 \bve \lambda_0 \leq 0, \lambda_1 \geq 0, \lambda_2 \geq 0, \; \lambda_0 + \lambda_1 + \lambda_2=1\}
$$
where $\a_0$, $\a_1$ and $\a_2$ are three non-aligned points. Points  $\a_1$ and $\a_2$  are called the real vertices of $\Tinft $, while $ \a_0$ is called the fictitious vertex.  
We set
$$
\ominft  = \aoverbrace[L1R]{\Tinft_1 \cup \cdots \cup \Tinft_M}[U]^{\circ},
$$
and
$$
\oms = \R^2 \backslash  {\bominft} =  \aoverbrace[L1R]{ \omega \cup \omz}[U]^{\circ} 
$$
The  domain $\oms$ is a fictitious {\it bounded} open set which will play a prominent role in the method.  \\
$\;$\\
In the sequel, for $1 \leq i \leq M$, $S_i$  designates the triangle whose vertices
are $\zero$, $\a_{1}^{(i)}$ and $\a_{2}^{(i)}$, where $\a_{1}^{(i)}$ and $\a_{2}^{(i)}$ are the real vertices
of $\Tinft_i$. Thus, 
$$
S_1 \cup \cdots \cup S_M =\boms, \;  \cup_{i=1}^M (S_i \cup \Tinft_i) = \R^2, 
$$
and 
$$
\bomz \cap \bominft = \boms \cap \bominft = \cup_{i=1}^M (S_i \cap \Tinft_i).
$$
For each $i \leq M$, let $r_i$  be the unique {\it affine} function defined over $S_i \cup \Tinft_i$ satisfying $r_i(\zero)=0$ and $r_i(\a_{1}^{(i)}) = r_i(\a_{2}^{(i)}) = 1$. It can be easily seen that
$r_i(\x) \geq 1$ for $\x \in \Tinft_i$ and $0 \leq r_i(\x) \leq 1$ for $\x \in S_i$. The {\it global
radius} is defined on $\R^2$ as follows
\begin{equation}
r(\x) = r_i(\x) \mbox{ if } \x \in S_i \cup \Tinft_i, \; i = 1, \cdots, M.
\end{equation}
This function $\x \in \R^2 \mapsto r(\x)$ is continuous on $\R^2$ (see \cite{boulmezaoudm2an}).  Moreover, there exists
two constants $c_1 > 0$ and $c_2 > 0$ such that (see \cite{boulmezaoudm2an}) 
\begin{equation}\label{estima_ray}
c_1 |\x| \leq r(\x) \leq c_2 |\x|, \;  \mbox{ for all } \x \in \R^2. 
\end{equation}
Define also the polygonal inversion $\Phi \; :\;  \bominft \mapsto \boms$ by
$$
\; \Phi(\x) = \frac{\x}{r(\x)^2}, \;  \mbox{ for } \x \in \bominft. 
$$
The transformation  $\Phi$ is bijective and continuous from
$\bominft$ into $\boms \backslash \{\zero\}$ and 
$$
 \Phi^{-1}(\xs) = \frac{\xs}{r(\xs)^2},  \mbox{ for  } \xs \in  \oms \backslash \{\zero\}
$$
(although $\Phi$ can be extended to $\R^2 \backslash \{0\}$ and
can be seen as an involution,  we prefer to distinguish between $\Phi^{-1}$ and $\Phi$).  We also have
$$
\; r(\Phi(\x)) = \frac{1}{r(\x)},  \mbox{ for } \x \in \bominft, 
$$
and
\begin{itemize}
\item $\Phi(\x) = \x$ iff $\x \in {\bominft} \cap  {\boms}={\bominft} \cap  {\bomz}$.
\item $\Phi(\Tinft_i ) = S_i \backslash \{\zero\} $, $i = 1,\cdots,M$.
\end{itemize}
\begin{remark}
{\it 
Note that $M$, the number of infinite triangles,  is not a discretization parameter and is not intended to become large 
or to tend to infinity. In general, the large domains $ \Tinft_1, \cdots,  \Tinft_M$
remain fixed during mesh refinement (\eqref{decompo_inft} is indeed a domain decomposition). For example, if $\bomega \subset  [-R, R]^2$ for some $R > 0$,  one can choose the following decomposition (which will be adopted in the numerical tests):  
\begin{equation}
\om = \omz \cup \Tinft_1 \cup \Tinft_2 \cup  \Tinft_3 \cup  \Tinft_4,
\end{equation}
with
$$
\begin{array}{rcl}
\Tinft_1 &=& \{ (x, y) \in \R^2 \bve  x \geq \max(R,  |y|) \}, \\
\Tinft_2 &=& \{ (x, y) \in \R^2 \bve  y\geq  \max(R,  |x|)   \}, \\
\Tinft_3 &=& \{ (x, y) \in \R^2 \bve x \leq - \max(R,  |y|)  \}, \\
\Tinft_4 &=& \{ (x, y) \in \R^2 \bve  y\leq - \max(R,  |x|)   \},
\end{array}
$$
and $\omz = \R^2 \backslash (\overline{\omega} \cup \Tinft_1 \cup \Tinft_2 \cup  \Tinft_3 \cup  \Tinft_4)$. With such a decomposition, we have 
$$
r(x, y) = \max (\frac{|x|}{R}, \frac{|y|}{R}).
$$
}
\end{remark}
\begin{figure}
\begin{center}
\includegraphics[width=0.40\textwidth,height=5.5cm]{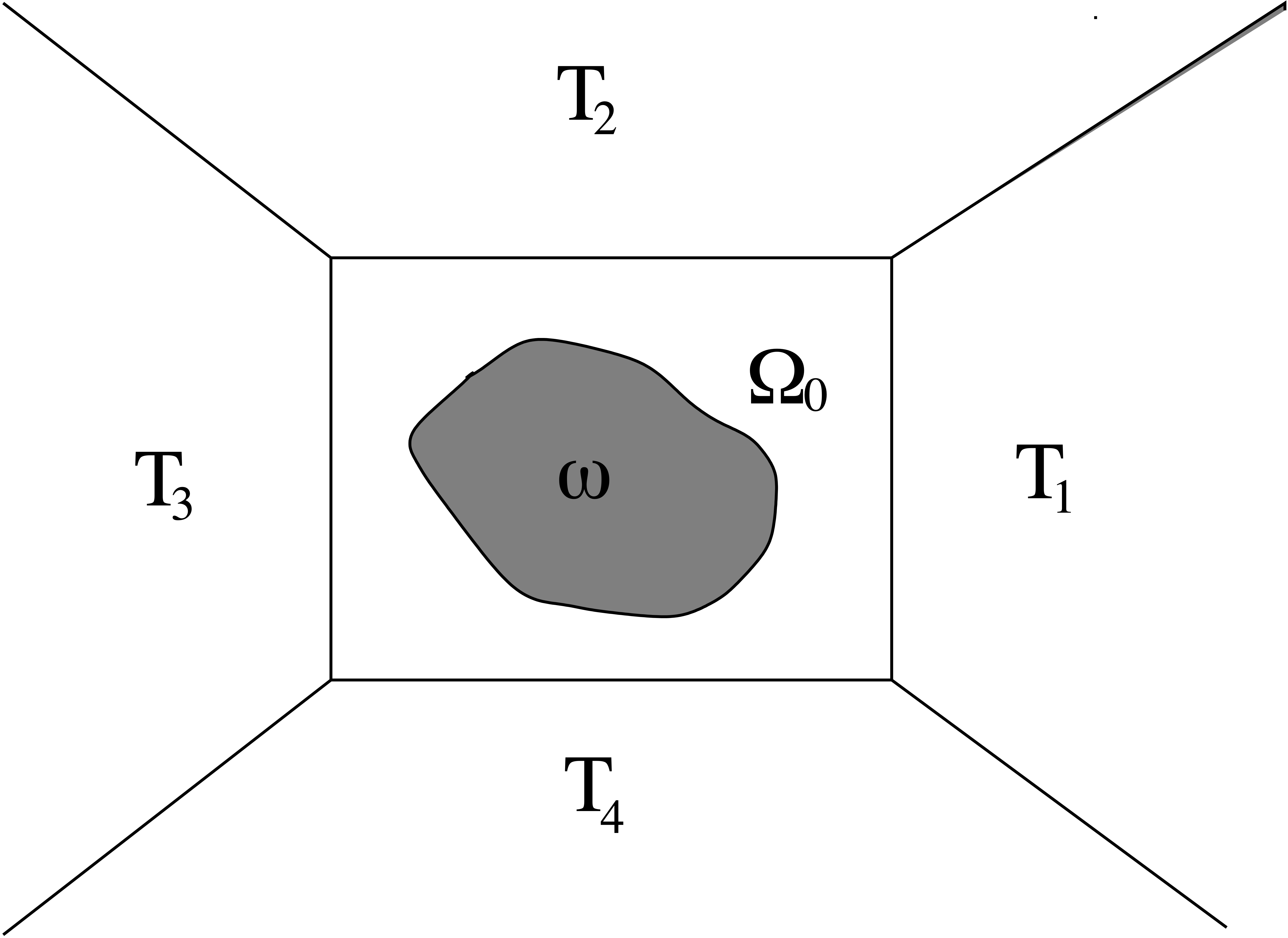}
\;\;\;
\includegraphics[width=0.40\textwidth,height=5.5cm]{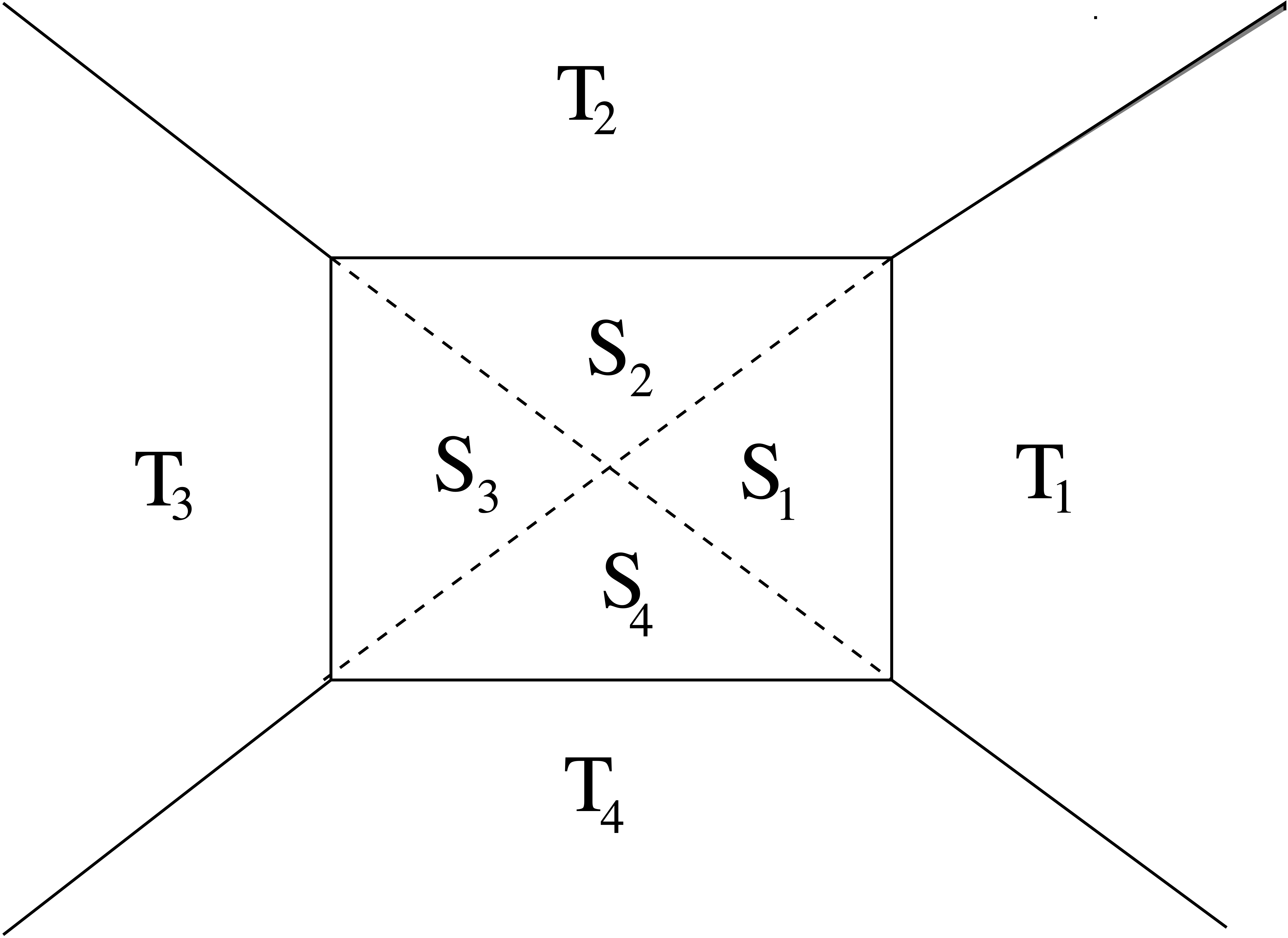}
\end{center}
\caption{(left) decomposition of the exterior domain $\om$ into the union of four infinite triangles and a bounded domain $\omz$.  (right) The four triangles $S_1, \cdots, S_4$  make up the fictitious  bounded domain $\oms$.} 
\label{decomp_illust}
 \end{figure}

Now, we introduce  a pair of triangulations $(\Tauh, \Taush)$  satisfying the following requirements:
\begin{itemize}
\item $\Tauh$ (respectively $\Taush$) is a conformal triangulation of  $\omz$ (respectively, of $\oms$)  which is shape regular in the classical sense,
\item for all $i \leq M$, the restriction of   $\Taush$ to $S_i$ is a triangulation of $S_i$,
\item the triangulations $\Tauh$ and $\Taush$ have
the same trace on $\pt \ominft \cap \pt \omz = \pt \oms \cap \pt \omz$.
\item $\Taush$ is $\mu$-graded, where $\mu \in (0, 1]$ is a fixed gradation parameter (see \cite{boulmezaoudm2an}). In other words,
$\Taush$  satisfies
\begin{equation}\label{shapereg_tauh1}
\max_{K \in \Taussh} \frac{h_K}{d^{1-\mu}_K} \leq \cst_1 h, \;
\end{equation}
\begin{equation}\label{shapereg_tauh2}
\max_{K \in \Taush \backslash \Taussh} h_K \leq \cst_2 h^{1/\mu}, \;
\end{equation}
\begin{equation}\label{shapereg_tauh3}
\min_{\Ks \in \Taussh} d_{\Ks} \geq \cst_3  h^{1/\mu},
\end{equation}
where $ \Taussh = \{ \Ks \in  \Taush \bve \zero \not \in  \Ks\}$ and 
$$
 h = \max_{K \in \Tauh \cup  \Taush }  h_K \mbox{ and } d_K =  {\rm dist}(0, K) :=\inf_{\x \in K} |\x|.
$$
Here $ \cst_1 > 0$ $ \cst_2 > 0$ and $ \cst_3$  are three constants which are independent of
the triangulation. 
\end{itemize}

The construction of triangulations satisfying conditions \eqref{shapereg_tauh1},
 \eqref{shapereg_tauh2} and  \eqref{shapereg_tauh3} is detailed in \cite{boulmezaoudm2an}, \cite{boulmezaoud_mziou1}, \cite{boulmezaoud_mziou2} and  \cite{bhowmikmziou}. We may observe that triangles of $\Taush$ are of size $h$ when they are
 adjacent to the interface $\boms\cap \bominft$, and  of  size $h^{1/\mu}$ when they touch the origine. \\
 The last item in unrolling the inverted finite element method is the functional transform
 $v \fleche \hv$ which associates to each function $v$ defined over $\bominft$, the function
 $\hv$ defined over $\boms$ as follows:
 \begin{equation}\label{definition_kelv}
 \hv({\xs})\ = r(\xs)^{-\theta+1} v(\Phi^{-1}(\xs)) \mbox{ for }  \xs  \in \boms.
\end{equation}
Here $\theta$ is a fixed real parameter.  For reasons that will become clear later, we assume that
\begin{equation}\label{inclusion_cond}
\theta > 0.
\end{equation} 
Conversely, $v$ can also be expressed
in terms of $\hv$ as follows
 \begin{equation}\label{inv_kelv}
 v(\x) = r(\x)^{-\theta+1}  \hv(\Phi({\x})),  \mbox{ for }  \x  \in \bominft.
\end{equation}
We fix now a natural number $k \geq 1$ (which will be chosen equal in the numerical tests)
and define the discrete space
$$
\begin{array}{rcl}
W_h&=&\left\{v_h\in \CC^0(\R^2 \backslash \omega)  \bve  \; \forall K\in \Tauh, \;  v_{h|K}\in \mathbb{P}_k (K),  \right. \\
&& \; \left. \forall \Ks\in \Taush, \;  \hv_{h|\Ks}\in \mathbb{P}_k(\Ks), \;  \hv_{h}(\zero) = 0 \right\}.
\end{array}
$$
Function of $W_h$ are piecewise polynomial in the FEM region $\omz$, but not in the IFEM region
$\ominft$. The latter fact is due to the distorsion caused by the inversion.  \\
In addition, observe that for $v \in W_h$,
$$
|\hv(\xs)| \leq C |\xs| \mbox{ for } \xs \in  \oms,
$$
and in view of \eqref{inv_kelv} and \eqref{estima_ray}
$$
|v(\x)| \leq C  r(\x)^{-\theta+1} r( \Phi({\x})) \leq  C |\x|^{-\theta}, \; \mbox{ for } \x \in \ominft,
$$
where $C$ is a generic constant. Similarly, we also have
$$
|\grad v(\x)| \leq C   |\x|^{-\theta-1}, \; \mbox{ for } \x \in \ominft.
$$
Since $\theta > 0$, we deduce that 
\begin{equation}\label{inclusion_wh}
W_h \subset \Wlog{1}{0}{\om}.
\end{equation}
Furthermore, $W_h$ is a finite dimensional space. When $k=1$, its dimension is given by (see Section \ref{impl_sec}): 
\begin{equation}\label{dimWh}
\dim W_h = N_i + N_b + N_i^\star + N^\star_b.
\end{equation}
where
\begin{itemize}
\item $N_i$ is the number of internal nodes of the triangulation $\Tauh$ (that is to say the nodes which are strictly inside $\omz$), 
\item $N_b$ is the number of nodes of the triangulation $\Tauh$ belonging to the boundary  $\pt \omega$,
\item $N_i^\star$ is the number of internal nodes in the triangulation $\Taush$ of $\oms$ (the node
$\zero$ is not counted),
\item $N_b^\star$  is the number  nodes which are common to the triangulations $\Tauh$ and $\Taush$ (that is, nodes belonging to $\boms \cap \bominft = \bomz \cap \bominft$).
\end{itemize}
$\;$\\
The discrete problem can be written as: find $u_h \in W_h$ such that: for all $w_h \in W_h$
\begin{equation}\label{weak_form_disc}
\int_{\om} \sigma \grad u_h. \grad w_h dx +  \left(\int_{\om} \wei(\x) u_h  dx\right)\left(\int_{\om} \wei(\x) w_h  dx\right) =  \int_{\om} f w_h dx +   \<g, w_h\>_{\pt \om}.
\end{equation}
We state this
\begin{proposition}\label{error_estima_under_asspt}
Under assumptions  \hyp{1}-\hyp{2}, the discrete problem
has one and only one solution $u_h \in W_h$. Furthermore, if  $u \in H^{k+1}_{loc}(\om)$ is such that 
\begin{equation}\label{decreasing_u}
\forall |\lambda| \leq k+1, \; \|(\pt^\lambda u)(|\x|,  \sigma)\|_{L^2(\S^1)} \leq  \frac{C}{|\x|^{\theta+|\lambda|}} \mbox{ for } |\x| \geq R,
\end{equation}
for some constants $C > 0$ and $R > 0$ are two constants, then for each $\eps \in ]0,  \min(1, \theta)[$, there exists a constant $C_\eps  > 0$, not depending on $u$ and $h$ such that
\begin{equation}\label{err_estim_theta}
\|u - u_h\|_{\Wlog{1}{0}{\om}} \leq C_\eps (h^{k} \|u\|_{H^{k+1}(\omz)} +  h^{k\min(\frac{\mu_0}{\mu}, 1)} \|u\|_{ \Wei{k+1}{k+\theta-\eps}{\ominft}}),
\end{equation}
where $\mu_0= \frac{\theta-\eps}{k}.$ If in addition $\mu < \frac{\theta}{k}$, then
\begin{equation}\label{err_estim_theta_bis}
\|u - u_h\|_{\Wlog{1}{0}{\om}} \leq C^\star  h^{k},
\end{equation}
for some constant $ C^\star$ not depending on $h$.
\end{proposition}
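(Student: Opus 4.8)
The plan is to treat this as a standard Céa-type argument combined with a careful quasi-interpolation estimate in the weighted spaces. First I would establish existence and uniqueness for the discrete problem: by \eqref{inclusion_wh} the discrete space $W_h$ is a (finite-dimensional) subspace of $\Wlog{1}{0}{\om}$, so the bilinear form
$$
a(u,w) = \int_{\om} \sigma \grad u. \grad w\, dx + \left(\int_{\om} \wei(\x) u\, dx\right)\left(\int_{\om} \wei(\x) w\, dx\right)
$$
is continuous and coercive on $W_h$ by exactly the same Hardy-type inequality invoked in the proof of Proposition \ref{proof_propo_exist}; Lax--Milgram then gives a unique $u_h \in W_h$. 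Since both \eqref{weak_form_n} and \eqref{weak_form_disc} use the same $a(\cdot,\cdot)$ and the same right-hand side, Galerkin orthogonality holds, and Céa's lemma yields
$$
\|u - u_h\|_{\Wlog{1}{0}{\om}} \leq C \inf_{v_h \in W_h} \|u - v_h\|_{\Wlog{1}{0}{\om}}.
$$
So the whole proposition reduces to constructing a good interpolant $v_h \in W_h$ of the exact solution $u$.

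Next I would build $v_h$ piecewise on the two regions of the domain decomposition $\bome = \bomz \cup \bominft$. On $\omz$, since $u \in H^{k+1}_{loc}(\om)$ and the triangulation $\Tauh$ is shape regular, the classical $\mathbb P_k$ Lagrange (or Scott--Zhang) interpolant gives the term $h^k \|u\|_{H^{k+1}(\omz)}$ in a routine way, with matching of traces on $\pt\oms \cap \pt\omz$ handled by the third triangulation compatibility hypothesis. On $\ominft$, the key point is that the transform \eqref{definition_kelv} maps $\Wlog{1}{0}{\ominft}$-type norms of $v$ to ordinary Sobolev norms of $\hv$ on the bounded set $\oms$, and maps the weighted space $\Wei{k+1}{k+\theta-\eps}{\ominft}$ to a space of functions on $\oms$ whose $H^{k+1}$-regularity degenerates near the origin $\zero$ at a controlled algebraic rate governed by the exponent $\theta - \eps$. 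This is precisely the situation for which the $\mu$-graded mesh $\Taush$ with gradation \eqref{shapereg_tauh1}--\eqref{shapereg_tauh3} was designed: one applies the standard interpolation estimate on each $K^\star \in \Taush$, using $h_{K^\star} \lesssim h\, d_{K^\star}^{1-\mu}$ away from the origin and $h_{K^\star} \lesssim h^{1/\mu}$ at the origin, sums the elementwise errors weighted appropriately, and the decay hypothesis \eqref{decreasing_u} (transported through $\Phi$) makes the sum converge; optimizing the gradation exponent yields the rate $h^{k\min(\mu_0/\mu,1)}$ with $\mu_0 = (\theta-\eps)/k$. Pulling this interpolant back through \eqref{inv_kelv} gives an element of $W_h$, and transporting the error estimate back gives the $\ominft$-contribution in \eqref{err_estim_theta}.

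Finally, \eqref{err_estim_theta_bis} follows by simply noting that if $\mu < \theta/k$ then, after choosing $\eps$ small enough that still $\mu < (\theta-\eps)/k = \mu_0$, we have $\min(\mu_0/\mu,1) = 1$, so both terms in \eqref{err_estim_theta} are $O(h^k)$ and one absorbs the $h$-independent norms of $u$ into the constant $C^\star$.

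\emph{Main obstacle.} The routine Céa/Lax--Milgram part is immediate from what is already proved; the real work — and the step I expect to be delicate — is the interpolation estimate on $\ominft$. One must verify carefully that (i) the functional transform $v \mapsto \hv$ really does carry $\Wei{k+1}{k+\theta-\eps}{\ominft}$ into the correct weighted-by-distance-to-origin Sobolev space on $\oms$ with the stated exponents (this involves chasing the Jacobian of $\Phi$, the factor $r(\xs)^{-\theta+1}$, the relation $r(\Phi(\x)) = 1/r(\x)$, and $k+1$ derivatives via Leibniz/Faà di Bruno), and (ii) the weighted sum of local interpolation errors over the graded mesh $\Taush$ is bounded, which requires the three gradation inequalities to interlock exactly with the degeneracy exponent so that both the "bulk" elements and the elements touching $\zero$ contribute $O(h^{k\min(\mu_0/\mu,1)})$. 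The constraint $\theta > 0$ from \eqref{inclusion_cond} and the restriction $\eps \in (0,\min(1,\theta))$ are exactly what keep these exponents in the admissible range; getting the bookkeeping of exponents right is where the proof lives, and it parallels the corresponding argument for the Dirichlet problem in \cite{bhowmikmziou}, to which one can defer for the more tedious estimates.
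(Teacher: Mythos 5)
Your proposal follows essentially the same route as the paper: the paper's proof is precisely Lax--Milgram/Galerkin for well-posedness, C\'ea's lemma for the reduction to the best-approximation error, and a citation to \cite{boulmezaoudm2an} and \cite{bhowmikmziou} for the interpolation estimate on the graded inverted mesh. Your additional sketch of how that cited estimate is obtained (transporting $\Wei{k+1}{k+\theta-\eps}{\ominft}$ through $\Phi$ and summing local errors over the $\mu$-graded triangulation) correctly identifies where the real work lies, which the paper itself leaves to the references.
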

\begin{proof}
Well posedness of the finite dimensional problem \eqref{weak_form_disc} is straighforward
(this is a Galerkin approximation of an elliptic problem). Error estimate \eqref{err_estim_theta_bis}
follows from C\'ea's Lemma
\begin{equation}
\|u-u_h\|_{\Wlog{1}{0}{\om}} \leq C \inf_{v_h \in W_h}\|u-v_h\|_{\Wlog{1}{0}{\om}}.
\end{equation}
and estimate of the best approximation error given in \cite{boulmezaoudm2an}. The reader can refer
to \cite{bhowmikmziou} and  \cite{boulmezaoudm2an} for other details.
\end{proof}

\section{Implementation and computational results}\label{impl_sec}
Here focus is to briefly describe the implementation of the method and
to show some numerical results when $k=1$.  Let us first give a basis of $W_h$.  Let 
 $(\x_i)_{1 \leq i \in J}$ be the nodes
of  $\Tauh$ and $(\x_i)_{1 \leq i \in \pt J}$, with $\boms \cap \ominft$,  the  nodes
of  $\Tauh$ belonging to the common common boundary
 (thus, $(\x_i)_{1 \leq i \in J  \backslash \pt J}$ are internal
nodes of $\Tauh$). Similarly, let $(\xst{i})_{i \in \Js }$ be  the nodes of $\Taush$  from which
we exclude the origine and $(\xst{i})_{i \in \tJs}$  nodes of   $\Taush$ belonging to the common boundary
$\boms \cap \bominft$ (thus, $(\xst{i})_{i \in \Js  \backslash \tJs}$ are internal nodes of $\Taush$). 
 Since the triangulations $\Tauh$ and 
$\Taush$ have the same trace on $\boms \cap \ominft$, the sets 
$\{ \x_{i} \bve i \in  \pt J\}$ and 
$\{ \xst{i} \bve i \in \tJs\}$ are equal. \\
Define $\bw_i$, $i \in J$, as the unique function
of $W_h$ satisfying
$$
\bw_i(\x_j) = \delta_{i, j} \mbox{ for all } j \in J, \; \hbw_i(\xst{j}) = 0 \mbox{ for all } j \in \Js  \backslash  \tJs.
$$
Define also $\bws_i$, $i \in \Js$, as the unique function of $W_h$ satisfying
$$
\bws_i(\x_j) = 0 \mbox{ for all } j \in  J  \backslash \pt J, \; \hbws_i(\xst{j}) = \delta_{i, j} \mbox{ for all } j \in \Js.
$$
We may notice that
\begin{itemize}
\item  $\bw_i =0$ in $\bominft$ for all $i \in  J   \backslash  \pt J$. 
\item $\bws_i = 0$  in $\bomz$ for $i \in  \Js  \backslash  \tJs$.
\item If  $\x_i \in \boms \cup \bominft$, $i \in J$, then the support of $\bw_i$ lies straddles the two domains 
$\bomz$ and $\bominft$.
\end{itemize}
The functions $(\bw_i)_{i \in J}$ and $(\bws_i)_{i \in  \Js  \backslash  \tJs}$ form a basis of $W_h$ and
for all $v_h \in V_h$  we can write
\begin{align}\label{f:linear_model001a}
v_h(\x) &=\sum_{i \in J} v_h (\x_i)\ \bw_i(\x) +  \sum_{i \in \Js \backslash  \tJs}  \widehat{v}_h (\xst{i}) \bws_i(\Phi(\x)), \; \mbox{ for } \x \in \om.
\end{align}
Using this basis, one can write the discrete problem \eqref{weak_form_disc} into the form $A U = B$  with
$A$ the stifness matrix whose entries are integrals of the form
\begin{equation}
(A)_{i, j} := \int_{\om}   \sigma (\x) \grad \psi_i . \grad\psi_j dx  = \int_{\omz}   \sigma (\x) \grad \psi_i . \grad\psi_j dx + \int_{\ominft}   \sigma (\x) \grad \psi_i . \grad\psi_j dx,
\end{equation}
where $\psi_i$ and $\psi_j$ are two basis functions.  The integral on $\omz$ on the right hand side
can be computed In the same way as in the finite element method. The second integral is over 
the unbounded subdomain $\ominft$ and can be transformed to a (singular) integral on the
bounded domain $\oms$ by means of a change of variables. In \cite{bhowmikmziou} it is proven that
$$
\begin{array}{l}
  \dps{ \int_{\ominft} \sigma(\x)(\nabla \psi_i)^T \nabla \psi_j\ dx }  =  \dps{ \int_{\oms}     (\nabla \psihat_i)^T \Matstar(\xs)      (\nabla \psihat_j)  d{\xstar} }\\
 \dps{  + \int_{\oms} \bfs(\xs)   \psihat_i(\xs)  \psihat_j(\xs)   d{\xstar}  }\\
 \dps{ +   \int_{\oms}   \cfs(\xs)      [\psihat_i(\xs) \nabla \psihat_j(\xs) + \psihat_j(\xs) \nabla \psihat_i(\xs)]^T  d{\xstar}.}
  \end{array}
$$
with
$$
\begin{array}{rcl}
\Matstar(\xs) &=&  \dps{  \sigma(\Phi(\xs)) \frac{r(\xs)^{2 \theta-4}}{\nrm{\vh}^2}   \left(\nrm{\vh}^2 r(\xs)^2I -   2 r(\xs) (\vh \xs^T + \xs \vh^T)  + 4\xs {\xs}^T\right)}, \\
\bfs(\xs)   &=& \dps{  - (\theta-1)^2 \sigma(\Phi(\xs)) \frac{ r(\xs)^{2 (\theta-2)}}{\nrm{\vh}^2}},  \\
\cfs(\xs) &=& \dps{  - (\theta-1) \sigma(\Phi(\xs)) \frac{  r(\xs)^{2 \theta-3} }{ \nrm{\vh}^2} ( \vh - \frac{2 \xs}{r(\xs)}), }
\end{array}
$$
where $\vh = \vh (\xs)$ is the altitude vector corresponding to  $\Tinft_1,\cdots,\Tinft_M$  (see  \cite{bhowmikmziou} or \cite{boulmezaoudm2an}).  In the latter formulas, the vectors $\xs$, $\nabla \psi_i$ and $\vh$ are considered as column ones.  \\
These integrals are then computed by means of quadrature rules on triangles, as explained
in \cite{bhowmikmziou} and \cite{boulmezaoudm2an}  (see also
\cite{hammer1, hammer2}, \cite{dunavant}, \cite{silvestre_q},  \cite{strangfix} or \cite{zienkiewicz_book}).   One can also use an affine mapping to transform the integrals
on a triangle $K$ to the standart triangle $ {\hat{K}} = \{(x, y) \in \R^2 \bve x \geq 0, \; y \geq 0, \; x+ y \leq 1\},$ or 
 the map $(\xi, \eta) \in [0, 1]^2 \mapsto  (\xi, \eta(1-\xi))  \in  {\hat{K}}$ which transforms the square $[0, 1]^2$
 into $ {\hat{K}}$ (allowing the use of tensorized quadrature formulas). \\
All the computational experiences which follow are carried out with Matlab software.  The purpose of these numerical experiences is to verify that the method is efficient with this Neumann problem, when the coefficients are constant or even variable to infinity.  The impact of the mesh gradation, already studied many times (see \cite{boulmezaoudm2an}, \cite{bhowmikmziou}, \cite{boulmezaoud_mziou1} and   \cite{boulmezaoud_mziou2} and  \cite{boulmezaoud_divcurl_ifem}), will not be studied again here. \\
$\;$\\
 In all the following simulations $\omega$ is the unit ball of $\R^2$   and unless otherwise indicated, we choose: 
\begin{equation}\label{value_pratique_theta}
 \theta = 1.01.
\end{equation}

$\;$\\
{\it Example 1. } 
We consider the system
$$
- \Delta u = f \mbox{ in } \R^2 \backslash \bomega, \; \frac{\pt u}{\pt n} = 0 \mbox{ on } \pt \omega,
$$
with $f$ chosen such that the exact solution is given by
\begin{equation}\label{essai_neum}
 u(x, y) = \frac{x}{\sqrt{x^2+y^2}} \sin\left(\frac{\pi}{2} \frac{1}{(x^2+y^2)^{2}}\right).
 \end{equation}
 The long distance behavior of this function is as follows:
 \begin{equation}\label{essai_neum_ex1}
 |u(x, y)|  \sim \frac{c_1}{r^4}, \;  |\grad u| \sim \frac{c_2}{r^5},  \;  |D^2 u| \sim \frac{c_3}{r^6}, 
 \end{equation}
 with $r = \sqrt{x^2+y^2}$.  It follows that $u \in H^2_{loc}(\om)$ and the assumption  \eqref{decreasing_u} is satisfied
  with $k= 1$.   According to Proposition \ref{error_estima_under_asspt},  one must have 
\begin{equation}\label{err_estim_example1}
\|u - u_h\|_{\Wlog{1}{0}{\om}} \leq C_1  h,
\end{equation}
 regardless of the value chosen by the parameter $\mu \in (0, 1]$. \\
  Table \ref{tab_ex1_Neumann}  summarizes the numerical results obtained with this example. In particular, it includes the values of the relative weighted   $L^2$-error on the solution $u$  and the $L^2$-error on its gradient and this for different values of the mesh size $h$ and of the gradation parameter $\mu$ ($\mu = 1, 0.75$ or $0.5$). In Figure \ref{exmp1_fig_errors}    the curves of these errors are displayed versus the mesh size $h$ in a logarithmic scale.  As expected, the  two errors decrease with respect to $h$ and we can observe that the value of gradation parameter $\mu$  has no significant effect on the errors (curves are quasi-superposed).   In other words, since the solution decreases sufficiently at large distances, that is when $r \to +\infty$,   it is not necessary to grade the inverted mesh in order to capture the behavior of the solution at infinity; the method is efficient 
  regardless of the chosen value of the gradation mesh $\mu$.  This in accordance with forecasts of Proposition \ref{error_estima_under_asspt}.  Furthermore, by calculating the slopes of these curves we observe that   the weighted $L^2$ error on the solution decreases as $h^2$  while the $L^2$ error on the gradient is of the order of $h$. Although this confirms estimate \eqref{err_estim_example1}, there is a superconvergence of the 0th order error in the (weighted) $L^2$-norm, as in the finite element method in bounded domains. We have no theoretical proof of this observation which is specific to this example, since, contrary to the method of finite elements (see, e. g.,  \cite{ciarlet_fem}), this superconvergence of the (weighted) $L^2$-error does not occur with other problems (see \cite{boulmezaoudm2an},   \cite{bhowmikmziou}, \cite{boulmezaoud_mziou1}, \cite{boulmezaoud_mziou2},  and \cite{boulmezaoud_divcurl_ifem}) nor in the case of the 3rd example which will follow.  \\
 Figure  \ref{example1_solutions_images}  shows the 2D view of the exact solution  and the approximate solutions for $\mu=1$. 
 We can observe that the approximate solutions is close to the exact one; the two solutions are difficult to distinguish 
 with the naked eye. 
\begin{table}[!h]
\begin{center}
\begin{tabular}{|r|c|ccccc|}
\hline
 \hline
$\mu$ &  \textbf{Mesh size   h} 	&     0.65 &       0.32  &  0.21 &   0.15 &     0.07		\\					
 \hline			
   $1$&  Wghd. mean value   &  -2.3e-5 &  4.6e-5 & 8.3e-5 & 2.4e-5 &  4.7e-6\\
&\textbf{Rel.  $L^2_\wei$-error}  	 &  {\bf   0.166 }&        {\bf  0.034 }  &    {\bf  0.015 }   &  {\bf   0.012  }&      {\bf     0.003 }\\
 &  in fem region   	         & 0.164  &   0.034  &  0.015    & 0.012 &  0.003 \\			
&  in ifem region  			& 0.233  &    0.014  &  0.004  &  0.003   &  0.0008 \\		
& \textbf{Rel. $L^2$-error on $\grad u$}   &   {\bf  0.716 }  &       {\bf  0.238 }  &   {\bf   0.163  }&  {\bf   0.147 }  &     {\bf   0.074 }\\				
 &in fem region &    0.734  &   0.243 &   0.168  &  0.152   &   0.076	\\
& in ifem region&       0.298  &   0.152  &  0.046 &   0.061 &      0.037  \\			
  \hline			 	
  $0.75$ &Wghtd. mean value   &  -4.4e-5 &  4.6e-5 & 8.15e-5 & 2.3e-5 & 4.6e-6\\
& \textbf{Rel.  $L^2_\wei$-error} 	 &      {\bf  0.167 }   &      {\bf  0.033  }  &   {\bf  0.015 }   &   {\bf  0.012 }  &   {\bf    0.003 }  \\			
& in fem region  	           &  	0.164  &   0.033  &  0.014  &  0.012 &     0.003\\			
& in ifem region  			& 0.249   & 0.015   & 0.004   & 0.002  &   0.001 \\		
& \textbf{Rel. $L^2$-error on $\grad u$}   &  {\bf    0.716  } &     {\bf   0.238 } &     {\bf  0.163  }&     {\bf  0.147  }&   {\bf     0.074 }\\					
& in fem region &  	 0.733  &   0.242   & 0.167  &  0.151   & 0.075 \\
& in ifem regin&         0.312  &    0.167 &   0.051 &   0.068 &   0.041 \\	
  \hline
   $0.5$ & Wght. Mean value  & -6.9e-5  & 4.6e-5 & 7.9e-5&  2.3e-5& 4.4e-6\\					
& \textbf{Rel.  $L^2_\wei$-error}   	 &  {\bf   0.167 }  &     {\bf   0.033  } &    {\bf  0.014  } &  {\bf    0.012 }  &      {\bf    0.003 }	 \\			
& in fem region     	         & 0.164   & 0.033  &  0.014  &  0.012  &   0.003	 \\			
& in ifem region    			&  0.275   &   0.029  &  0.005  &  0.005  &    0.002\\		
& \textbf{Rel. $L^2$-error on $\grad u$} &     {\bf   0.717 }  &    {\bf    0.240 } &  {\bf     0.163  }&   {\bf    0.148 }&    {\bf     0.074 }\\					
 &in fem region &   	0.733  &   0.242   & 0.167   & 0.151   &   0.075\\
& in ifem region &     0.345  &    0.201  &  0.064   & 0.085   & 0.052  \\
\hline
\end{tabular}
\caption{ (Example 1) Global and local relative weighted $L^2$ errors on $u$ and relative $L^2$ error on $\grad \u$ for several values of $\mu$.  Here $ \omz   = ]-1.5,1.5[^2 \backslash \bomega$ (FEM region) and $\ominft =\R^2  \backslash   [-1.5,1.5]^2$ (IFEM region). }\label{tab_ex1_Neumann}
\end{center}
\end{table}
\begin{figure}
\begin{center}
\includegraphics[width=0.49\textwidth,height=6.5cm]{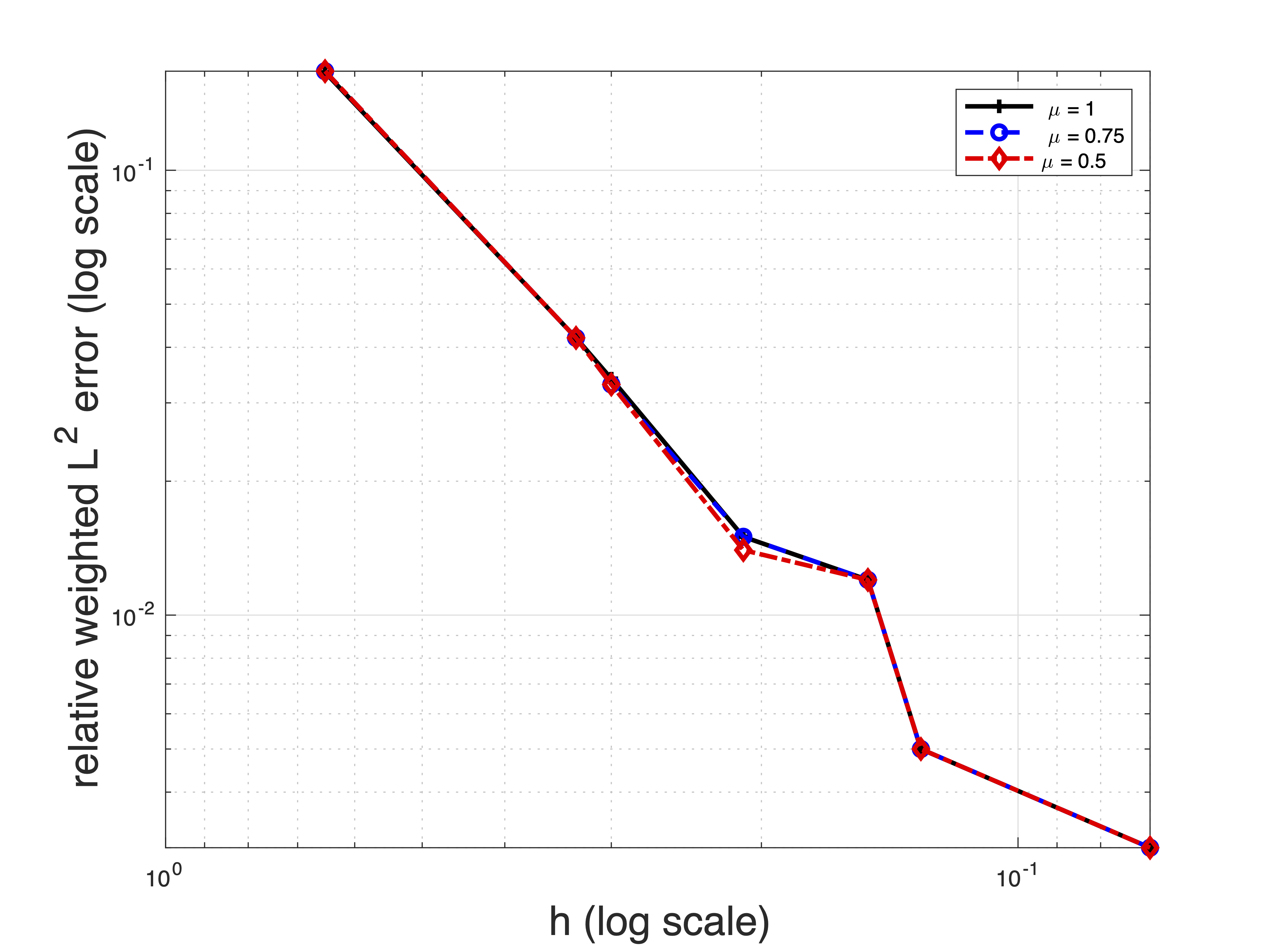}
\includegraphics[width=0.49\textwidth,height=6.5cm]{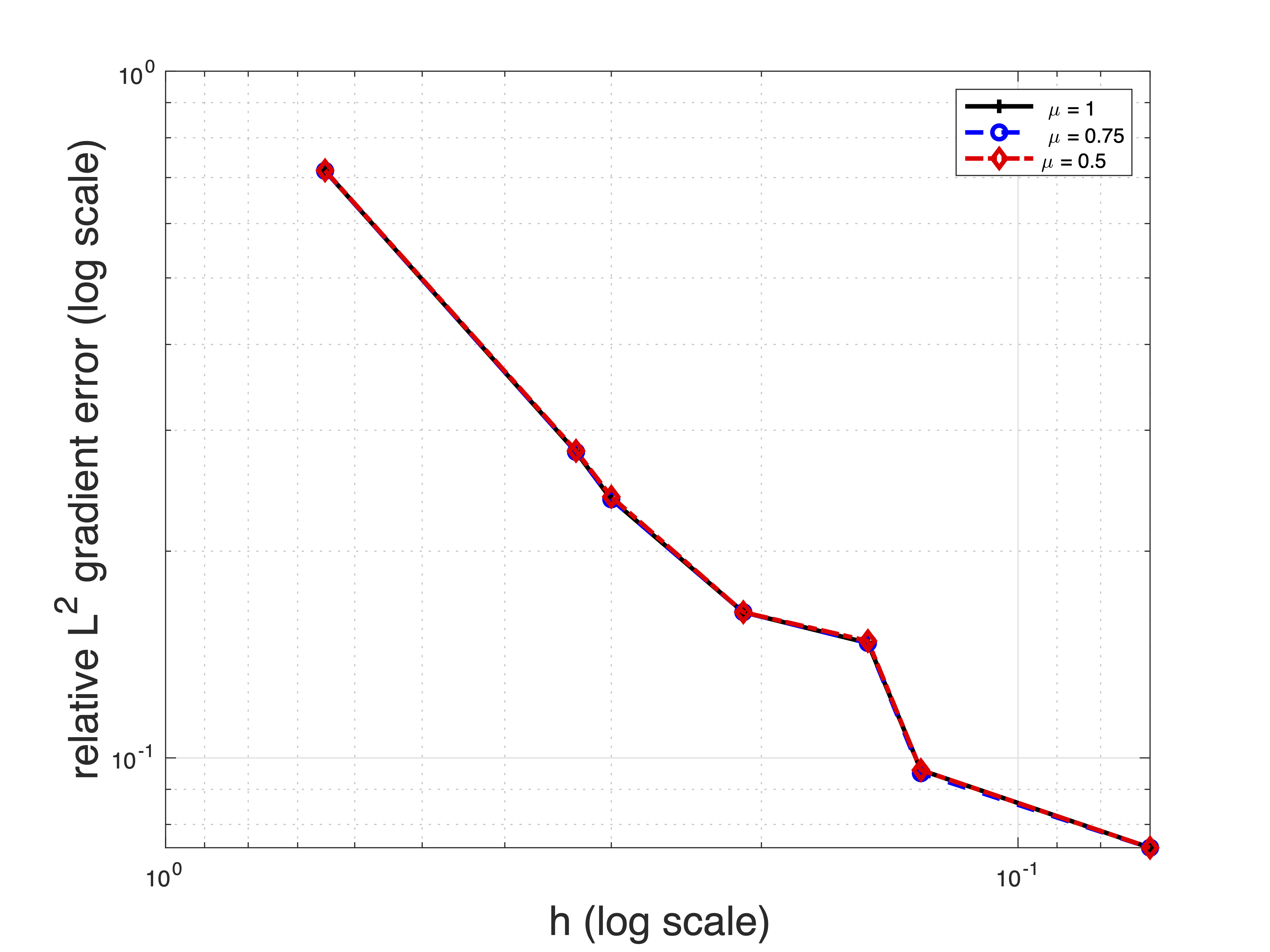}
\end{center}
\caption{ (example 1) global relative weighted $L^2$ error on $u$  (left figure) and  global relative $L^2$ error on $\grad \u$ (right figure). }
\label{exmp1_fig_errors}
 \end{figure}
 \begin{figure}
\begin{center}
\includegraphics[width=0.49\textwidth, height=5.5cm, width = 7cm]{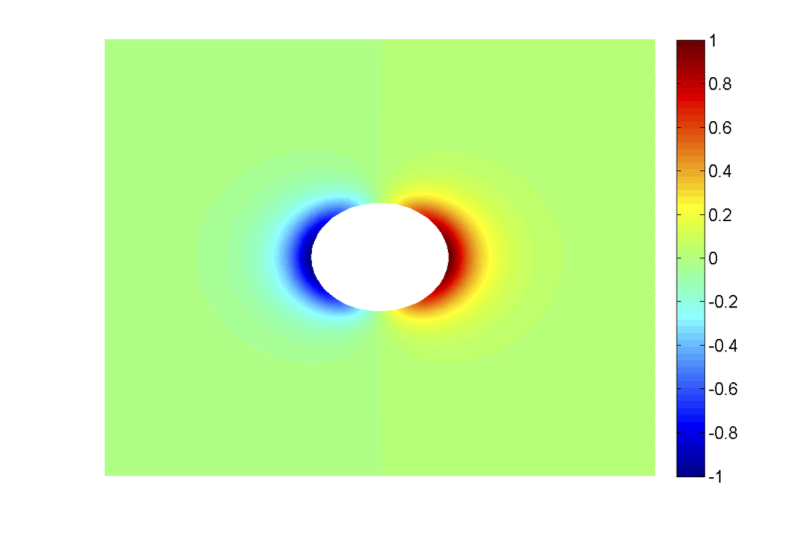}
\includegraphics[width=0.49\textwidth, height=5.5cm,  width =7cm]{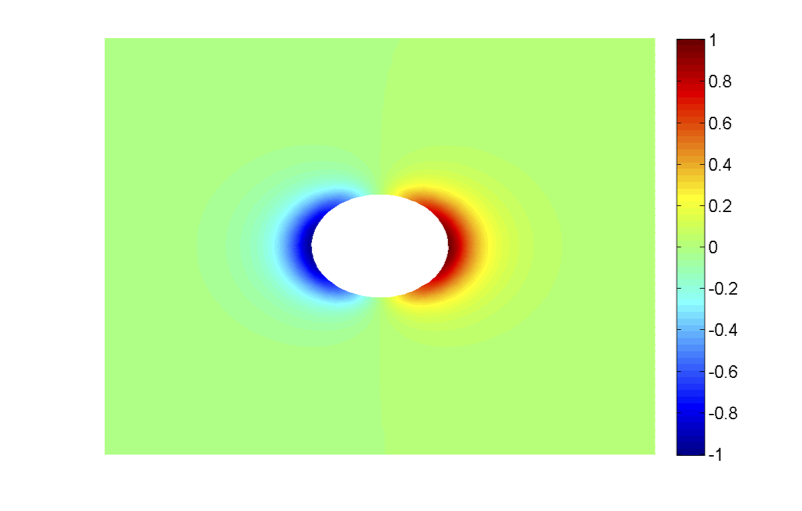}
\end{center}
\caption{(example 1) exact solution (left) and approximate solutions  (right) with $\mu = 1$.}
\label{example1_solutions_images}
 \end{figure}

$\;$\\
{\it Example 2. } 
We consider the  system \eqref{second_ord_equa}-\eqref{neumann_code} with a coefficient  $\sigma$ varying to infinity:  
$$
\sigma(x, y) =1 - \frac{x^2-y^2}{2(x^2+y^2)}, 
$$
and   $f$ chosen such that the exact solution is 
\begin{equation}\label{ex2_essai_neum}
 u(x, y) = \frac{x}{\sqrt{x^2+y^2}} \sin\left(\frac{\pi}{2} \frac{1}{(x^2+y^2)^{2}}\right).
 \end{equation}
 Here also the solution has  a behavior of type \eqref{essai_neum_ex1} at large distances, and according to \eqref{err_estim_theta_bis}, the error estimate \eqref{err_estim_example1}, remains true, independently of the value taken by $\mu$. \\
 The numerical results illustrated in Table \ref{tab_ex2_Neumann} and  Figure \ref{exmp2_fig_errors} are analogous to those obtained in the first example, although the coefficient $\sigma$ is variable here up to infinity, that is to say without being constant  at large distances.  This again confirms the usability of the method even with variable coefficients, i. e. when one cannot rely on an explicit knowledge of the fundamental solution to make an integral formulation or a series expansion in the farthest region.
 
\begin{table} \label{tableExample2}
\begin{center}
\caption{Global relative weighted $L^2$ error on $u$  (left figure) and  global relative $L^2$ error on $\grad \u$ (right figure).}

\begin{tabular}{|r|c|cccccc|}
\hline
 \hline 
$\mu$ & \textbf{Mesh size   h}             &        0.65  &   0.30  & 0.21   & 0.15 & 0.13& 0.07\\
  \hline
$1$   &  Wght. mean value   & 1.7e-2&  1.1e-4 & 3.7e-4 & 7.8e-5 & 2.8e-5 &  1.01e-5 \\
   &\textbf{Rel.  $L^2_\wei$-error}        &{\bf 0.942  }      &{\bf    0.106  }   &{\bf  0.037 }   &{\bf   0.022 }   &{\bf 0.011  }&{\bf  0.007 }  \\
   &                in fem region       &0.890    &   0.101   & 0.037  &  0.021 & 0.011   & 0.007\\
    &               in ifem region      &2.465   &   0.230   & 0.082   & 0.037 &0.022    & 0.013\\
   & \textbf{Rel. $L^2$-error on $\grad u$}       &{\bf 0.984  }     & {\bf  0.247  }     & {\bf 0.165 }   &{\bf 0.148 }    &{\bf 0.096 }      & {\bf0.074 }   \\
    &               in fem region       &0.967    &   0.250   & 0.170   & 0.152 & 0.099   & 0.076\\
      &             in ifem region      &1.236   &   0.185   & 0.060    & 0.064 & 0.025  &  0.037\\
    \hline  
 \hline
$0.75$   & Wght. mean value  &1.7e-2  & 1.2e-4  & 3.7e-4  & 7.7e-5   & 2.4e-5 & 9.9e-6\\
   & \textbf{Rel.  $L^2_\wei$-error}        &{ \bf 0.946  }    &{ \bf 0.105    }   &{ \bf  0.039  }      &{ \bf 0.022 }      &{ \bf 0.011 } &{ \bf  0.007 }  \\
   &                in fem region       &0.893   & 0.101     & 0.037      & 0.021     & 0.010 & 0.006\\
   &                in ifem region      & 2.486   & 0.224     & 0.081    &  0.035    & 0.022 & 0.013\\
   & \textbf{Rel. $L^2$-error on $\grad u$}        &{ \bf 0.984   }   &{ \bf  0.247  }      &{  \bf 0.165     }     &{ \bf 0.148  }       &{  \bf 0.096  }  &{  \bf 0.074 }  \\
   &                in fem region       &0.967  & 0.250     & 0.169      &  0.152      & 0.099 & 0.076\\
    &               in ifem region      & 1.240    & 0.198    & 0.064     & 0.070      & 0.028& 0.041\\
    \hline
    \hline
$0.5$ & Wght. mean value &  1.7e-2   & 1.2e-4 & 3.6e-4 & 3.6e-4 & 2.4e-5 & 9.8e-6\\
   & \textbf{Rel.  $L^2_\wei$-error}     & {\bf 0.948 }  &{ \bf  0.105 }     &{  \bf 0.038 }     &{ \bf 0.022   }  & { \bf 0.011  }   &{  \bf 0.007  }  \\
   &                in fem region     & 0.895    & 0.101   & 0.037  & 0.021 & 0.0105 &  0.006\\
   &                in ifem region     & 2.493     &  0.212   & 0.080   &  0.033 & 0.022 & 0.012\\
   &  \textbf{Rel. $L^2$-error on $\grad u$}     &{  \bf 0.985    }  &{  \bf 0.249  }     &{  \bf 0.165  }    &{  \bf 00.149 }   &{  \bf 0.096 }   &{ \bf  0.075 }  \\
     &              in fem region      & 0.968     & 0.250    & 0.169   &  0.152  & 0.099 &  0.076\\
   &                in ifem region     & 1.248    & 0.228    & 0.079  & 0.087  & 0.034 & 0.052\\
\hline
\end{tabular}
\caption{(example 2) global and local relative weighted $L^2$ errors on $u$ and relative $L^2$ error on $\grad \u$ for several values of $\mu$.  Here $\omz = ]-1.5,1.5[^2 \backslash \bomega$ (FEM region) and $\ominft =\R^2  \backslash   [-1.5,1.5]^2$ (IFEM region).  }\label{tab_ex2_Neumann}

\end{center}
\end{table}
\begin{figure}
\begin{center}
\includegraphics[width=0.49\textwidth,height=5.5cm]{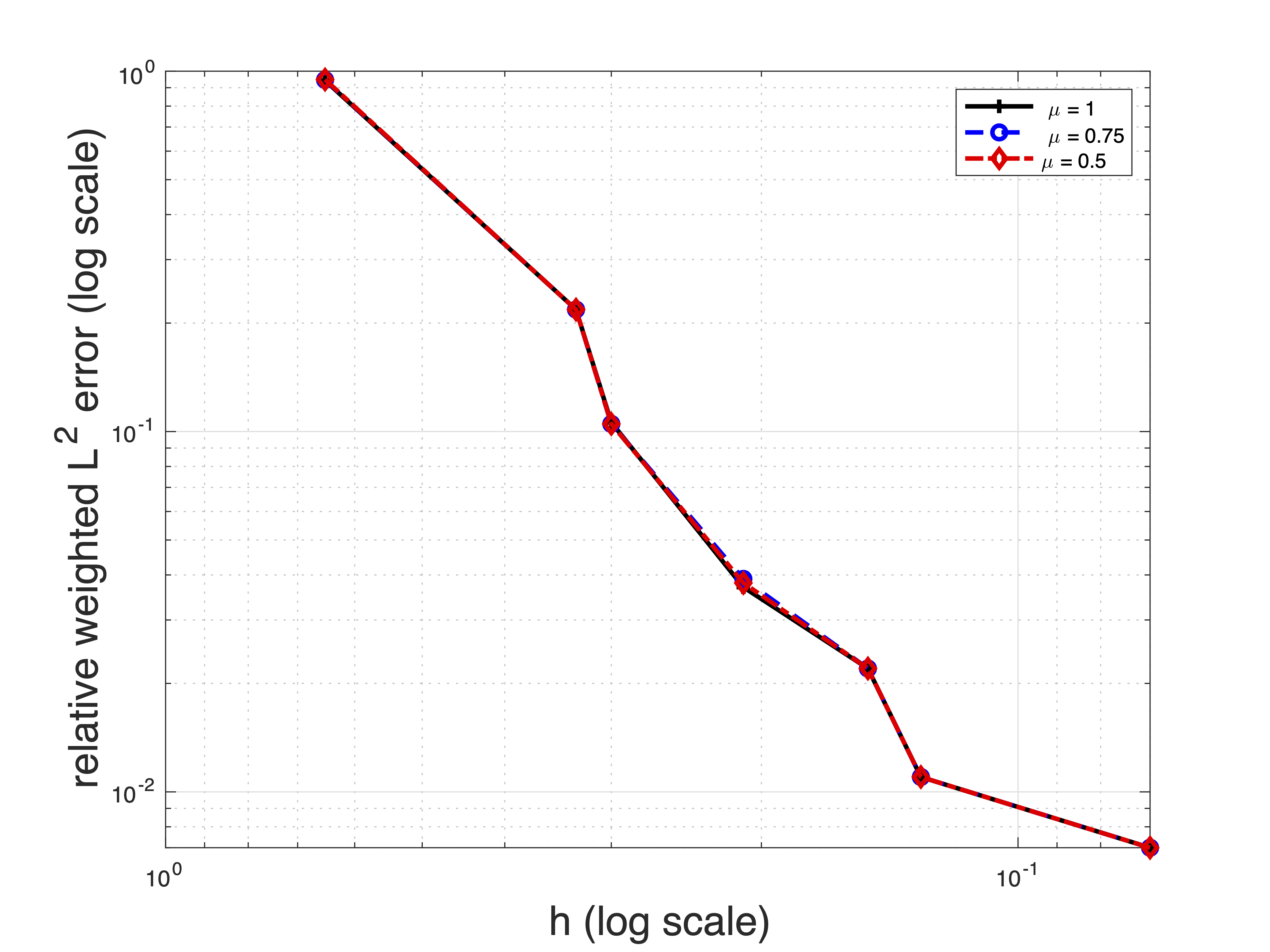}
\includegraphics[width=0.49\textwidth,height=5.5cm]{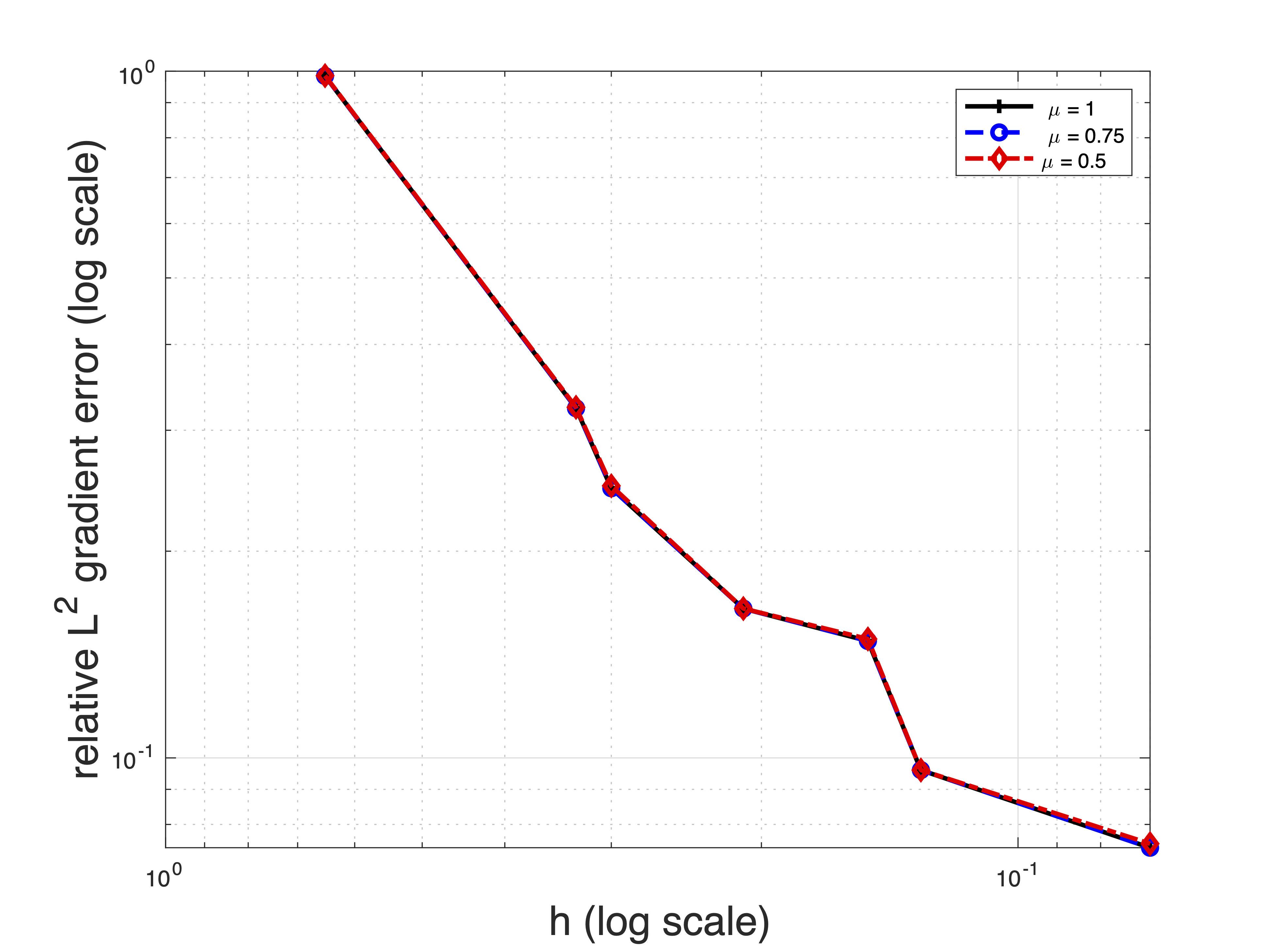}
\end{center}
\caption{(exemple 2) global relative weighted $L^2$ error on $u$  (left figure) and  global relative $L^2$ error on $\grad \u$ (right figure) versus $h$. }
\label{exmp2_fig_errors}
 \end{figure}

\section{Conclusion}
As a conclusion,  it has been demonstrated that the inverted finite element method easily adapts to second-order problems with Neumann boundary conditions in two-dimensional exterior domains. The numerical results it gives confirm its effectiveness and  the behavior of the error is in agreement with the theoretical predictions, even when the coefficient of the equations is varying over large distances and up to infinity. Furthermore, the method provides very good results  regardless of the selected value  of  the parameter $\mu$. Indeed, in the examples we have considered here the gradation does not affect the rapidity of the algorithm whose both relative weighted $L^2$ errors on $u$ and  relative $L^2$ error on $\grad \u$  are decreasing with respect to the mesh size $h$. However, from a qualitative point, this behavior turns out to be mainly the same as when we impose Dirichlet type conditions. Only the theoretical formulation of the problem is different. \\
$\;$\\
{\bf Acknowledgment.} \\
N. Kerdid, S.K. Bhowmik and S. Mziou gratefully acknowledge the support of the National Plan for Science, Technology and Information (MAARIFAH), King Abdulaziz City for Science and Technology, KSA, award number 12-MAT2996-08. \\

{\bf Declarations.}\\
$\;$\\
{\it Conflicts of interests}: The authors declare no competing interests.

\bibliographystyle{plain}
\bibliography{biblio_ifem_2022}

\end{document}